
\documentclass{article}
\usepackage{amsmath,amsthm}
\usepackage{amsfonts}
\usepackage{graphicx}
\usepackage{paralist}

\setcounter{MaxMatrixCols}{10}

\newtheorem{theorem}{Theorem}

\newtheorem{lemma}{Lemma}

 \makeatletter
    \renewenvironment{proof}[1][\proofname]{      \par\pushQED{\qed}\normalfont      \topsep6\p@\@plus6\p@\relax
      \trivlist\item[\hskip\labelsep\bfseries#1\@addpunct{.}]      \ignorespaces
    }{      \popQED\endtrivlist\@endpefalse
    }
    \makeatother
\input{tcilatex}
\begin{document}

\title{Rotors in triangles and tethrahedra}
\author{ Javier Bracho and Luis Montejano}

\maketitle

\section{Introduction}

A polytope $P$ is {\it circumscribed} about a convex body $\phi\subset \mathbb{R}^n$ if $\phi\subset P$ and each facet of $P$ is contained in a support hyperplane of $\phi$.  
We say that a convex body $\phi\subset \mathbb{R}^n$ is a {\it rotor} of a polytope $P$ if for each rotation $\rho$ of $\mathbb{R}^n$ there exist a translation $\tau$ so that $P$ is circumscribed about $\tau\rho\phi$.

If $Q^n$ is the $n$-dimensional cube then a convex body $\Phi$ is a rotor of $Q^n$ if and only if $\Phi$ has constant width. However, there are convex polytopes that have rotors which are not of constant width. 

A survey of results in this area has been given by Golberg \cite{Gol}. See also the book Convex Figures of Boltyanskii and Yaglom \cite{BY}.

It is well known that if $\Phi$ is a convex plane figure which is a rotor in the polygon $P$, then every support line of $\Phi$ intersects its boundary in exactly one point, and if $\Phi$ intersects each side of $P$ at the points $\{x_1, \dots x_n\}$, then the normals of $\Phi$ at these points  are concurrent.

In this paper we shall prove that if $P$ is a triangle, then there is a baricentric formula that describes the curvature of bd$\Phi$ at the contact points.
We prove also that if $\Phi\subset \mathbb{R}^3$ is a convex body which is a rotor in a tetrahedron  $T$ 
then the normal lines of $\Phi$ at the contact points with $T$ 
generically belong to one ruling of a quadric surface. 
\bigskip

\section{Rotors in the triangle}

Consider $\Phi$ a smooth  rotor in the triangle $T$ and suppose that the three sides of $T$ intersect the boundary of $\Phi$ at the points ${x_1, x_2,x_3}$, respectively. As in the case of constant width bodies in which 
the radii of curvature of the boundary  at the ends of a binormal sum to $h$, we are interested in a formula that involves the curvatures of the boundary of $\Phi$ at ${x_1, x_2,x_3}$.

A $C^m$ framed curve $(\alpha, \lambda)$ is a curve of class $C^m$ given by a parametrization of the following form:
there is a support function ${\cal P}:(-\delta,\delta)\to \mathbb{R}$ of class
 $C^m,$ $m\geq2$, such that $\alpha(\theta)= {\cal P}(\theta)u(\theta_0+\theta) + {\cal P}^\prime(\theta)u^\prime(\theta_0+\theta)$
 and  $\lambda$ is the tangent line through $\alpha(0)=x$, in the direction $x^\perp$.  Therefore, ${\cal P}^\prime(0)=0$ and 
 $\alpha(0)={\cal P}(0)u(\theta_0)$ is the closest point of the line $\lambda$ to the origin and the normal line of $\alpha$ at $\alpha(0)$ passes through the origin. 
 Where $u(\theta)=(\cos\theta,\sin\theta)$ and 
 $u^\prime(\theta)=(-\sin\theta,\cos\theta)$, for every $\theta\in \mathbb{R}.$
 
 A {\it sliding} along two given $C^n$ framed curves $(\alpha_1,\lambda_1)$ and $(\alpha_2,\lambda_2)$ is a one parameter family of Euclidean isometries $L_\theta$, $\theta\in (-\epsilon,\epsilon), \epsilon >0$, satisfying
 
 \begin{itemize}
 \item $L_0$ is the identity map,
 \item $L_\theta$ rotates the plane by an angle of $\theta$,
 \item $L_\theta (\lambda_i)$ is a tangent line of the curve $\alpha_i$, for each $\theta \in (-\epsilon,\epsilon)$ and $i=1,2$.
 \end{itemize} 

\begin{lemma} 
Let  $(\alpha_1,\lambda_1)$ and $(\alpha_2,\lambda_2)$ be two $C^n$ framed curves. Suppose that their normal lines at 
$\alpha_1(0)=x_1$ and $\alpha_2(0)=x_2$ are not parallel and are concurrent at the origin. Then
\begin{enumerate}
\item there is a unique sliding $L_\theta$, $\theta\in (-\epsilon,\epsilon), \epsilon >0$, along them, 
\item there is a $C^n$ map $f: (-\epsilon,\epsilon)\to \mathbb{R}^2$ such that $L_\theta( x)=R_\theta(x) + f(\theta)$, for every $x\in \mathbb{R}^2$, $f(0)=f^\prime(0)=0$,  where $R_\theta$ is the rotation of the plane about the origin by an angle of $\theta$.
\item If the origin does not lie in the line $\lambda_3$, then the envelope of $\{L_\theta(\lambda_3)\}_{\theta \in (-\epsilon,\epsilon)}$ is a  $C^n$ framed curve $(\alpha_3,\lambda_3)$, such that the tangent line at $\alpha_3(0)$ is $\lambda_3$ and the normal line at $\alpha_3(0)$ passes through the origin.
\end{enumerate}
\end{lemma}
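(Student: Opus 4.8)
The plan is to translate every condition into the support-function description of the two framed curves and reduce the existence of a sliding to solving a $2\times2$ linear system. Recall that if a framed curve has support function ${\cal P}$ and base direction $\omega$, so that $\alpha(\theta)={\cal P}(\theta)u(\omega+\theta)+{\cal P}'(\theta)u'(\omega+\theta)$, then its tangent line at the parameter $\theta$ is exactly the line $\{\,p:\langle p,u(\omega+\theta)\rangle={\cal P}(\theta)\,\}$; in particular $\lambda_i=\{\,p:\langle p,u(\omega_i)\rangle={\cal P}_i(0)\,\}$, where $\omega_i$ denotes the base direction of $(\alpha_i,\lambda_i)$ and $u(\omega_i)$ the common normal direction of $\lambda_i$ and of $\alpha_i$ at $x_i$. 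Writing a candidate sliding as $L_\theta(x)=R_\theta(x)+f(\theta)$ with $f(0)=0$ (forced by $L_0=\mathrm{id}$ and by the requirement that $L_\theta$ have rotational part $R_\theta$), a direct computation shows that $L_\theta(\lambda_i)$ is the line with unit normal $u(\omega_i+\theta)$ and support value ${\cal P}_i(0)+\langle f(\theta),u(\omega_i+\theta)\rangle$. Since two lines with non-antipodal unit normals coincide iff their normals and their support values agree, $L_\theta(\lambda_i)$ is tangent to $\alpha_i$ — necessarily at the parameter $\theta$ itself, for $|\theta|$ small — precisely when
\[
\langle f(\theta),u(\omega_i+\theta)\rangle={\cal P}_i(\theta)-{\cal P}_i(0),\qquad i=1,2 .
\]

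This is a linear system in $f(\theta)\in\mathbb{R}^2$ whose coefficient matrix $A(\theta)$ has rows $u(\omega_1+\theta)$ and $u(\omega_2+\theta)$; it is invertible for all sufficiently small $\theta$ iff $u(\omega_1)$ and $u(\omega_2)$ are linearly independent, which is exactly the hypothesis that the normal lines at $x_1$ and $x_2$ are not parallel (they already pass through the origin by the definition of a framed curve, which is the concurrency assumption). Hence on a small interval $(-\epsilon,\epsilon)$ there is a unique solution $f(\theta)=A(\theta)^{-1}b(\theta)$, with $b(\theta)=\big({\cal P}_1(\theta)-{\cal P}_1(0),\,{\cal P}_2(\theta)-{\cal P}_2(0)\big)$ of the same differentiability class as ${\cal P}_1,{\cal P}_2$ and $A(\theta)$ smooth; thus $f$ is $C^n$ and the sliding $L_\theta$ exists and is unique, proving (1) and the first half of (2). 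For the rest of (2), $b(0)=0$ forces $f(0)=0$, and differentiating the two identities at $\theta=0$ gives $\langle f'(0),u(\omega_i)\rangle+\langle f(0),u'(\omega_i)\rangle={\cal P}_i'(0)$; since $f(0)=0$ and ${\cal P}_i'(0)=0$, this yields $\langle f'(0),u(\omega_i)\rangle=0$ for $i=1,2$, and linear independence of $u(\omega_1),u(\omega_2)$ forces $f'(0)=0$.

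For (3), write $\lambda_3=\{\,p:\langle p,u(\omega_3)\rangle=c\,\}$ with $c\neq0$ (the origin is not on $\lambda_3$), where $c\,u(\omega_3)$ is the foot of the perpendicular from the origin. By the same computation as above, $L_\theta(\lambda_3)$ is the line with unit normal $u(\omega_3+\theta)$ and support value ${\cal P}_3(\theta):=c+\langle f(\theta),u(\omega_3+\theta)\rangle$, a $C^n$ function of $\theta$ with ${\cal P}_3(0)=c$ and ${\cal P}_3'(0)=\langle f'(0),u(\omega_3)\rangle+\langle f(0),u'(\omega_3)\rangle=0$ because $f(0)=f'(0)=0$. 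The envelope of a one-parameter family of lines whose unit normal $u(\omega_3+\theta)$ turns at unit speed is obtained by intersecting each line with its derivative line, giving the classical support-function parametrization $\alpha_3(\theta)={\cal P}_3(\theta)u(\omega_3+\theta)+{\cal P}_3'(\theta)u'(\omega_3+\theta)$; a one-line differentiation shows $\alpha_3'(\theta)=\big({\cal P}_3(\theta)+{\cal P}_3''(\theta)\big)\,u'(\omega_3+\theta)$, which is parallel to $L_\theta(\lambda_3)$, confirming that $\alpha_3$ is indeed that envelope. Hence $(\alpha_3,\lambda_3)$ is a $C^n$ framed curve with base direction $\omega_3$ and support function ${\cal P}_3$, and since ${\cal P}_3'(0)=0$ its tangent line at $\alpha_3(0)={\cal P}_3(0)u(\omega_3)=c\,u(\omega_3)$ is $\lambda_3=L_0(\lambda_3)$ and its normal line there passes through the origin, as required.

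The only delicate point is the bookkeeping in the first paragraph: correctly identifying the image line $L_\theta(\lambda_i)$ inside the support-function family of $\alpha_i$ and verifying that the matching parameter is forced to equal $\theta$. Once this identification is in place, everything else is routine — an invertible $2\times2$ linear system for (1) and (2), and the standard envelope-of-lines computation for (3) — so I expect no further obstacle; one only has to shrink $\epsilon$ so that the parameters involved stay in the domains $(-\delta,\delta)$ of the ${\cal P}_i$ and so that $A(\theta)$ remains invertible.
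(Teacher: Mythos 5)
Your proposal is correct, and its computational core is the same as the paper's: both arguments hinge on the identity that $L_\theta(\lambda_i)$ is the line with unit normal $u(\omega_i+\theta)$ and support value ${\cal P}_i(0)+\langle f(\theta),u(\omega_i+\theta)\rangle$, from which $f(0)=f'(0)=0$ and part (3) follow identically. Where you differ is in how existence and uniqueness of the sliding are established: the paper works inside the isometry group ${\cal E}$, identified near the identity with $(-\gamma,\gamma)\times\mathbb{R}^2$, shows that the locus $S_i$ of isometries carrying $\lambda_i$ to a tangent line of $\alpha_i$ is a $C^n$ surface with normal $(0,-u(\omega_i))$ at the origin, and obtains the sliding as the transversal intersection curve $(\theta,f(\theta))$ of $S_1$ and $S_2$; you instead solve the two tangency conditions directly as a $2\times 2$ linear system $A(\theta)f(\theta)=b(\theta)$ with rows $u(\omega_1+\theta),u(\omega_2+\theta)$. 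These are two faces of the same fact --- the paper's transversality condition is exactly the invertibility of your $A(0)$ --- but your version is more elementary and yields the regularity and uniqueness of $f$ by explicit inversion rather than by an implicit-function/transversality argument, while the paper's framing makes clearer that each tangency condition individually cuts out a hypersurface in the isometry group. You are also right to flag, and correctly dispose of, the one genuine subtlety (that the tangency parameter must equal $\theta$ for small $\theta$), a point the paper passes over silently.
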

\begin{proof} Let ${\cal E}$ be the Lie Group of orientation-preserving isometries of the Euclidean space $\mathbb{R}^2$. Let $R_\theta$ denote the rotation about the origin by an angle of $\theta$. Since every $g\in {\cal E}$ takes the form $g(x)=R_\theta(x) + f$ for some $\theta$ and a fixed $f\in \mathbb{R}^2$, we will identify a neighborhood of the identity in ${\cal E}$ with $(-\gamma, \gamma)\times \mathbb{R}^2\subset \mathbb{R}^3$, via the mapping $(\theta,f)\to R_\theta +f$.  Observe that the identity in ${\cal E}$ is identified with the origin in $\mathbb{R}^3$. 

Given a $C^m$ framed curve $(\alpha, \lambda)$ with support function ${\cal P}(\theta)$, consider the set
$$S=\{g\in {\cal E}\mid g(\lambda)\quad  \mbox{is a tangent line to} \quad \alpha \}$$
defined in the neighborhood of the identity in ${\cal E}$ (or of the origin in $\mathbb{R}^3$). We shall prove that $S$ is a surface of class $C^m$. Indeed, we have the following  explicit parametrization:  consider the map $\psi: \mathbb{R}^2\to \mathbb{R}^3$ given by 
$\psi (\theta,t)=(\theta, h(\theta, t))$, where $h(\theta, t)=({\cal P}(\theta)-{\cal P}(0))u(\theta_0+\theta) + tu^\prime(\theta_0+\theta)$, 
It is not difficult to verify that the for every $-\delta\leq\theta\leq\delta$ and $t\in R$, the isometry $L_\theta+ h(\theta, t)$ sends the line $\lambda$ to a tangent line of $\alpha$. Furthermore,
$$ \frac{d\psi}{d\theta}(0)=(1, {\cal P}^\prime(0)u(\theta_0))=(1,0,0)$$
and 
$$ \frac{d\psi}{dt}(0)=(0, u^\prime(\theta_0)).$$

Moreover, it follows that the normal vector to $S$ at the origin is $(0,-u(\theta_0))$.

Now, given two $C^m$ framed curves, $(\alpha_1,\lambda_1)$ and $(\alpha_2,\lambda_2)$, Let $S_1$ and $S_2$ be their corresponding surfaces. If $\alpha_i(0)={\cal P}_i(0)u(\theta_i)$, then the  normal vector to $S_i$ at the origin is $(0,-u(\theta_i))$, $i=1,2$, and since 
$\theta_1\not=\theta_2$, we have that in a neighborhood of the origin  $S_1$ and $S_2$  intersect transversally  in a curve of the form ($\theta, f(\theta))$ and hence the sliding can be written as

$$L_\theta= R_\theta + f(\theta)$$
where $f: (-\epsilon,\epsilon)\to \mathbb{R}^2$ is of class $C^m$.

Thus, for $i=1,2$ the support function of $\alpha_i$ is given by 
$${\cal P}_i(\theta)={\cal P}_i(0)+\langle f(\theta), u(\theta_i +\theta)\rangle .$$
where $\langle\cdot, \cdot \rangle $ denotes the interior product.

This implies that $f(0)=0$ and furthermore,  $0={\cal P}_i^\prime(0)=\langle f^\prime(0), u(\theta_i)\rangle $.  Since 
$\theta_1\not=\theta_2$, then $f^\prime (0)=0$.

Finally, let $\theta_3$  be such that $u(\theta_3)$  is orthogonal to the line $\lambda_3$ and let $r_3$ be the distance from $\lambda_3$ to the origin. Then  the support function of $\alpha_3$ is given by  
${\cal P}_3(\theta)=r_3+\langle f(\theta), u(\theta_3+\theta)\rangle $  and ${\cal P}_3^\prime(0)=0$ as we wished.
\end{proof}

\medskip

For curves of constant width $h$, the sum of the radii of curvature at extreme points of every diameter is $h$. For rotors in a triangle, the analogous result is the following baricentric formula. 

\medskip 
\begin{theorem} 

Let $\Phi$ be a rotor in the triangle $T$ with vertices $\{A_1,A_2,A_3\}$. Suppose the boundary of $\Phi$ is twice continuous differentiable and let $x_3=\Phi\cap A_1A_2$, $x_1=\Phi\cap A_2A_3$ and $x_2=\Phi\cap A_3A_1$.  
Let $\{a_1,a_2,a_3\}$ be the baricentric coordinates of the point $O$ with respect to the triangle $T$, where $O$ is the point at which the normal lines to $T$ at the points $x_1$, $x_2$ and $x_3$ concur. If $r_i$ is the distance from $O$ to $x_i$ and $\kappa_i$ the curvature of the boundary of $\Phi$ at $x_i$, $i=1,2,3$, then 
$$ \frac{ a_1}{\kappa_1 r_1} +  \frac{ a_2}{\kappa_2 r_2} +  \frac{ a_3}{\kappa_3 r_3}=1.$$

\end{theorem}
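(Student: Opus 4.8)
The plan is to apply the Lemma with all three framed curves equal to $\operatorname{bd}\Phi$ near $x_1,x_2,x_3$, to read off from it the local support functions of $\Phi$ as seen from $O$, and then to exploit the fact that the single vector $f''(0)\in\mathbb R^2$ (the second derivative of the translational part of the sliding) satisfies three scalar equations; the unique linear dependence among these equations, once interpreted barycentrically, is exactly the asserted formula.

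First I would put $O$ at the origin. Since $\Phi$ is a rotor, rotating it by $R_\theta$ and translating it back into $T$ yields a one-parameter family of isometries under which every side of $T$ stays tangent to $\operatorname{bd}\Phi$; this is a sliding along the three framed curves $(\alpha_i,\lambda_i)$, where $\lambda_i$ is the side of $T$ through $x_i$ and $\alpha_i$ is $\operatorname{bd}\Phi$ near $x_i$ (whose normal at $x_i$ passes through $O$ by the classical concurrency recalled in the introduction). By the Lemma this sliding has the form $L_\theta=R_\theta+f(\theta)$ with $f(0)=f'(0)=0$, and the support function of $\alpha_i$ relative to $O$ is
$$\mathcal P_i(\theta)=r_i+\langle f(\theta),\,u(\theta_i+\theta)\rangle,\qquad i=1,2,3,$$
where $u(\theta_i)$ is the unit outward normal of the side through $x_i$ and $\mathcal P_i(0)=r_i$ is the distance from $O$ to $x_i$.

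The key computation is the curvature. In this parametrization the radius of curvature of $\operatorname{bd}\Phi$ at $x_i$ equals $\mathcal P_i(0)+\mathcal P_i''(0)$ (the parameter of the curve and the normal angle $\theta_i+\theta$ differ by the constant $\theta_i$). Differentiating the formula above twice, using $u''=-u$ and $f(0)=f'(0)=0$, gives
$$\frac1{\kappa_i}=\mathcal P_i(0)+\mathcal P_i''(0)=r_i+\langle f''(0),\,u(\theta_i)\rangle,$$
hence $\langle f''(0),\,u(\theta_i)\rangle=\tfrac1{\kappa_i}-r_i$ for $i=1,2,3$. Now the directed edges of $T$ close up, so rotating them by a right angle gives the well-known relation $\ell_1 u(\theta_1)+\ell_2 u(\theta_2)+\ell_3 u(\theta_3)=0$, where $\ell_i$ is the length of the side through $x_i$. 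Taking the inner product of this identity with $f''(0)$ annihilates the left-hand side and leaves
$$\sum_{i=1}^{3}\ell_i\Big(\frac1{\kappa_i}-r_i\Big)=0,\qquad\text{that is}\qquad\sum_{i=1}^{3}\frac{\ell_i}{\kappa_i}=\sum_{i=1}^{3}\ell_i r_i.$$
Finally, because $Ox_i$ is perpendicular to the side through $x_i$, the triangle with that side as base and $O$ as apex has area $\tfrac12\ell_i r_i$; these three triangles tile $T$, so $\sum_i\ell_i r_i=2\,\mathrm{Area}(T)$ and $a_i=\dfrac{\ell_i r_i}{2\,\mathrm{Area}(T)}$, i.e.\ $\ell_i=\dfrac{2\,\mathrm{Area}(T)\,a_i}{r_i}$. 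Substituting into the last displayed equation and dividing by $2\,\mathrm{Area}(T)$ gives $\displaystyle\sum_i\frac{a_i}{\kappa_i r_i}=1$. (If $O$ lies outside $T$, the same argument works with signed distances and signed areas.)

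I do not expect a serious obstacle; the two steps that need care are the curvature computation — keeping straight that $\kappa_i^{-1}=\mathcal P_i(0)+\mathcal P_i''(0)$ in the framed-curve parametrization of the Lemma — and the identification of the unique linear relation among $u(\theta_1),u(\theta_2),u(\theta_3)$ as the one weighted by the side lengths, since it is precisely that weighting which converts the consistency condition into the stated barycentric identity.
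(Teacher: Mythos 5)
Your proof is correct, and its skeleton coincides with the paper's: both put $O$ at the origin, invoke the Lemma to get the sliding $L_\theta=R_\theta+f(\theta)$ with $f(0)=f'(0)=0$, and arrive at the same key identity $\tfrac1{\kappa_i}=r_i+\langle f''(0),u(\theta_i)\rangle$ (the paper writes it as $r_i+\langle f''(0),x_i\rangle/r_i$, computing $\mathcal P_i(\theta)=|R_\theta(x_i)+f(\theta)|$ rather than reading the support function off the Lemma as you do; your route is slightly cleaner). Where you genuinely diverge is in how the unknown $f''(0)$ is eliminated and the barycentric coordinates enter. The paper takes the barycentric relation $\sum b_ix_i=0$ of $O$ with respect to the contact triangle $\{x_1,x_2,x_3\}$, pairs it with $f''(0)$ to get $\sum b_ir_i^2/(\kappa_ir_i)=\sum b_ir_i^2$, and then proves $a_i=b_ir_i^2/\sum b_jr_j^2$ via the characterization $\langle A_i,x_j\rangle=r_j^2$ for $i\neq j$. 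You instead use the Minkowski relation $\sum\ell_iu(\theta_i)=0$ for the outward normals of $T$ and identify $a_i=\ell_ir_i/(2\,\mathrm{Area}(T))$ by tiling $T$ with the triangles over $O$. These are in fact the same linear dependence in disguise --- since $u(\theta_i)=x_i/r_i$, the weights satisfy $b_i\propto\ell_i/r_i$, whence $b_ir_i^2\propto\ell_ir_i\propto a_i$ --- but your packaging is more elementary and makes the geometric meaning of the coefficients transparent, at the modest cost of needing signed lengths and areas when $O$ falls outside $T$ (a case the paper's vector formulation absorbs without comment).
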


\begin{center}
\includegraphics[width=3.2in]{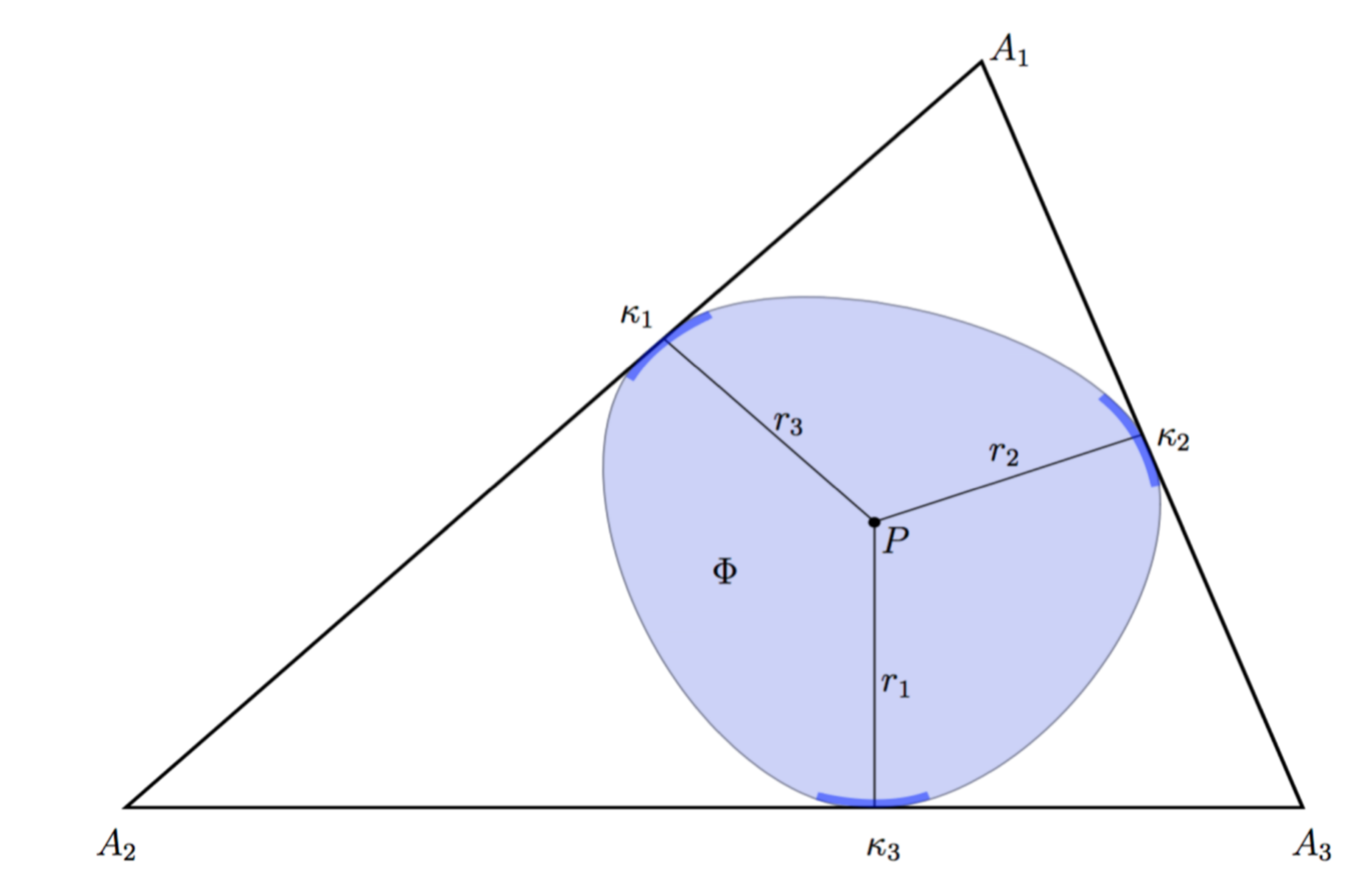} 

Figure 1  
\end{center}

\begin{proof}  Let $\alpha_i:(-\epsilon,\epsilon)\to \mathbb{R}^2$ be a $C^2$-parametrization of a neighborhood of the boundary of $\Phi$ around $x_i$,  with $\alpha_i(0)=x_i$ and let $\lambda_i$ be the line through $A_{i+1}A_{i+2}$, mod 3, so that $(\alpha_i, \lambda_i)$ are $C^2$ framed curves, whose corresponding normal lines at $x_i$ are concurrent at $O$. Suppose without loss of generality that $O$ is the origin. By Lemma 1, there is a sliding along the three framed curves. That is, there is a one parameter family of Euclidean isometries $L_\theta$, $\theta\in (-\epsilon,\epsilon), \epsilon >0$, satisfying
 
 \begin{itemize}
 \item $L_0$ is the identity map,
 \item $L_\theta$ rotates the plane by an angle of $\theta$,
 \item $L_\theta (\lambda_i)$ is a tangent line of the curve $\alpha_i$, for each $\theta \in (-\epsilon,\epsilon)$ and $i=1,2, 3$.
 \end{itemize} 
 
 Furthermore, there is a $C^2$ map $f: (-\epsilon,\epsilon)\to \mathbb{R}^2$ such that 
 $$L_\theta( x)=R_\theta(x) + f(\theta),$$
  for every $x\in \mathbb{R}^2$, $f(0)=f^\prime(0)=0$,  where $R_\theta$ is the rotation of the plane through the origin by an angle of $\theta$.
  
  Let ${\cal P}_i(\theta)$ be the pedal function of the framed curve $\alpha_i$, with ${\cal P}_i(0)=r_i= \break \mid x_i \mid$, $i=1,2,3$. Hence, ${\cal P}_i^\prime(0)=0$ and the radius of curvature of the boundary of $\Phi$ at $x_i$ is 
  $$\frac{1}{\kappa_i}= {\cal P}_i(0)+{\cal P}_i^{\prime\prime}(0).$$
  
  On the other hand, ${\cal P}_i(\theta) = \mid L_\theta( x_i)\mid =\mid R_\theta(x_i) + f(\theta)\mid.$   Hence,
  $${\cal P}_i(\theta)^2= \langle R_\theta(x_i) + f(\theta), R_\theta(x_i) + f(\theta)\rangle .$$

So,

$${\cal P}_i(\theta) {\cal P}_i^\prime(\theta)= \langle R_\theta(x_i) + f(\theta), R_\theta(x_i)^\perp + f^\prime(\theta)\rangle.$$

Let $h_i(\theta)= \langle R_\theta(x_i), f^\prime(\theta)\rangle  +\langle R_\theta(x_i)^\perp, f(\theta)\rangle + \langle f^\prime(\theta), f^\prime(\theta)\rangle $ in such a way that 
$${\cal P}_i^\prime(\theta)=\frac{h_i(\theta)}{{\cal P}_i(\theta)}$$

and

$${\cal P}_i^{\prime\prime}(\theta)=\frac{h_i^\prime(\theta){\cal P}_i(\theta)^2-h_i(\theta)^2}{{\cal P}_i(\theta)^3}.$$

Note that  $h_i(0)=0$ and $h_i^\prime(0)=\langle f^{\prime\prime}(0), x_\rangle $.

Since the radius of curvature of bd$\Phi$ at $x_i$ is given by ${\cal P}_i(0)+{\cal P}_i^{\prime\prime}(0)$, we have that for $i=1,2,3$

$$\frac{1}{\kappa_i}=r_i + \frac{\langle f^{\prime\prime}(0), x_i\rangle }{r_i}.$$

Let $\{b_1,b_2,b_3\}$ be the baricentric coordinates of the origin $O$ with respect the triangle with vertices $\{x_1,x_2,x_3\}$. That is: $b_1x_1+  b_2x_2+ b_3x_3=0$, with $b_1+b_2+b_3=1$. Hence, for $i=1,2,3$,

$$\frac{ b_ir_i^2}{\kappa_ir_i}=b_ir_i^2 + \langle f^{\prime\prime}(0), b_ix_i\rangle ,$$
and therefore,
$$\sum\frac{ b_ir_i^2}{\kappa_ir_i}=\sum b_ir_i^2+0.$$

To conclude the proof of the theorem, it will be enough to prove that 
$$ a_i = \frac{b_ir_i^2}{b_1r_1^2+b_2r_2^2+b_3r_3^2}.$$

The basic property that defines $A_i$ is $\langle A_i,x_j\rangle =\langle x_j,x_j\rangle =r_j^2$ for $i\not=j$.  Using it, one easily obtains that 
$$\langle b_1r_1^2A_1+b_2r_2^2A_2+b_3r_3^2A_3, x_j\rangle =\langle r_j^2A_j, b_1x_1+  b_2x_2+ b_3x_3\rangle =0,$$
for $j=1,2,3.$  This implies that $b_1r_1^2A_1+b_2r_2^2A_2+b_3r_3^2A_3=0$ because the $x_j$ generate $\mathbb{R}^2$, and from here 

$$\frac{b_1r_1^2}{\sum b_ir_i^2}A_1+\frac{b_2r_2^2}{\sum b_ir_i^2}A_2+\frac{b_3r_3^2}{\sum b_ir_i^2}A_3=0.$$

It follows that 
$$\frac{a_1}{\kappa_1r_1}+\frac{a_2}{\kappa_2r_2}+\frac{a_3}{\kappa_3r_3} = 1,$$
as we wished. 

\end{proof}

\section { The relation with immobilization problems}

Immobilization problems were introduced by Kuperberg \cite{K} and also appeared in \cite {O}. They were motivated by grasping problems in robotics (\cite {MNP1} and \cite {MNP2}). 

Let $\Phi \subset \mathbb{R}^n$ be a convex body. A collection of points $X$ on the boundary of $\Phi$ is said to immobilize $\Phi$ if any small rigid movement of $\Phi$ causes one point in $X$ to penetrate the interior of $\Phi$.  In the plane, for the case in which three points $X=\{x_1,x_2, x_3\}$ lie in the boundary $\Phi$, there is a baricentric formula involving the curvature of bd$\Phi$ at $x_i$ that allows us to know if $X$ immobilizes $\Phi$.  See \cite{BM}.

\begin{theorem} 

Let $\Phi$ be a twice continuous differentiable convex figure and let   $X=\{x_1,x_2, x_3\}$ be three points in the boundary of $\Phi$, whose normals are concurrent at the point $O$. 
Let $\{a_1,a_2,a_3\}$ be the baricentric coordinates of the point $O$ with respect to the vertices of the triangle formed be the three support lines of $\Phi$ at $x_1$, $x_2$ and $x_3$. Also, let $r_i$ be the distance from $O$ to $x_i$, let $\kappa_i$ be the curvature of the boundary of $\Phi$ at $x_i$, $i=1,2,3$, and let 
$$ \omega=  a_1\kappa_1 r_1 +  a_2\kappa_2 r_2+   a_3\kappa_3 r_3.$$
Then, if $\omega <1$, $\{x_1,x_2, x_3\}$ immobilize $\Phi$, and if $\omega >1$,  they do not. 

\end{theorem}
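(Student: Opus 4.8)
The plan is to translate immobilization into a planar linear feasibility problem whose constants record the second-order motion of the contact points, and then to decide that problem using the barycentric identity produced in the proof of Theorem 1; this recovers the result of \cite{BM}. Put $O$ at the origin, let $h$ be the support function of $\Phi$, and set $u(\psi)=(\cos\psi,\sin\psi)$. Since the normal of $\Phi$ at $x_i$ passes through $O$, the contact point is $x_i=h(\psi_i)u(\psi_i)$ for the angle $\psi_i$ with $u_i=u(\psi_i)$, so $h'(\psi_i)=0$, $h(\psi_i)=r_i$ and $h(\psi_i)+h''(\psi_i)=1/\kappa_i$. I will use throughout what the statement tacitly requires — that $O$ is interior to $\Phi$ and to the triangle $x_1x_2x_3$, so the barycentric coordinates $b_1,b_2,b_3$ of $O$ relative to $x_1,x_2,x_3$ are positive, $x_1,x_2,x_3$ positively span $\mathbb{R}^2$, and $\sum b_ix_i=0$, $\sum b_i=1$. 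A small rigid movement of $\Phi$ is a smooth path $g_t=R_{\theta(t)}+f(t)$ of orientation-preserving isometries with $g_0=\mathrm{id}$; writing $D_i(t,\psi)=h(\psi-\theta(t))+\langle f(t)-x_i,\,u(\psi)\rangle$, the fixed point $x_i$ avoids $\mathrm{int}\,g_t(\Phi)$ iff $m_i(t):=\min_\psi D_i(t,\psi)\le 0$, and $m_i(0)=0$ with the minimum attained at $\psi=\psi_i$.

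The first step is the jet analysis of $m_i$. By the envelope theorem (and $h'(\psi_i)=0$), $m_i'(0)=\partial_tD_i(0,\psi_i)=\langle f'(0),u_i\rangle$; for $m_i(t)\le0$ to persist on a two-sided neighbourhood of $0$ one needs $m_i'(0)=0$ for every $i$, hence $f'(0)=0$ since the $u_i$ span $\mathbb{R}^2$. So the only instantaneous motions are rotations about $O$, and the candidate is determined, at the order that matters, by $v:=f''(0)\in\mathbb{R}^2$ (a movement with $\dot g_0=0$ reduces, by applying the spanning argument to its first non-vanishing jet, to a reparametrized pure rotation about $O$, i.e. to $v=0$). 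Differentiating once more and tracking the moving minimizer, and using $h'(\psi_i)=0$ together with $h(\psi_i)+h''(\psi_i)=1/\kappa_i$ and $h(\psi_i)=r_i$, one computes, after normalising time so that $\theta'(0)=1$, that $m_i''(0)=r_i(1-\kappa_ir_i)+\tfrac1{r_i}\langle v,x_i\rangle=\tfrac1{r_i}(\langle v,x_i\rangle-c_i)$, where $c_i:=r_i^2(\kappa_ir_i-1)$. (This is the computation behind the radius-of-curvature formula $1/\kappa_i=r_i+\langle f''(0),x_i\rangle/r_i$ of Theorem 1, run backwards: there $f$ was the map forced by threefold tangency, here it is a free candidate.) Consequently $x_i$ penetrates $\mathrm{int}\,\Phi$ for small $t\ne0$ iff $\langle v,x_i\rangle>c_i$, stays strictly outside iff $\langle v,x_i\rangle<c_i$, so an escaping movement exists iff the system $\langle v,x_i\rangle\le c_i$, $i=1,2,3$, is feasible, and a strictly escaping one iff it is strictly feasible.

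Now I solve that $2$-variable system. Three half-planes $\langle v,x_i\rangle\le c_i$ have a common point iff $\sum\mu_ic_i\ge0$ for every $\mu_i\ge0$ with $\sum\mu_ix_i=0$ (Farkas/Gordan); since the $x_i$ positively span $\mathbb{R}^2$, such $\mu$ is a positive multiple of $(b_1,b_2,b_3)$, so the condition is $\sum b_ic_i\ge0$. Writing $c_i=\kappa_ir_i^3-r_i^2$ and substituting $a_i=b_ir_i^2/\sum_jb_jr_j^2$ from the proof of Theorem 1 gives $\sum b_ic_i=(\sum_jb_jr_j^2)(\omega-1)$, so feasibility is $\omega\ge1$ and strict feasibility is $\omega>1$. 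Hence: if $\omega>1$, the explicit movement $g_t=R_t+\tfrac12vt^2$ for a strict solution $v$ keeps every $x_i$ out of $\Phi$ for all small $t\ne0$, so $X$ does not immobilize $\Phi$; and if $\omega<1$, then for every candidate $v$ one gets $\sum b_ir_i\,m_i''(0)=\sum b_i(\langle v,x_i\rangle-c_i)=-\sum b_ic_i>0$, so some $m_i''(0)>0$, that $x_i$ penetrates $\mathrm{int}\,\Phi$, and every small movement fails — $X$ immobilizes $\Phi$.

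The real obstacle I foresee is the rigour of the first reduction: showing that it genuinely suffices to test the two-jet of the movement, i.e. that a movement with vanishing velocity (or vanishing higher jets) cannot escape when the rotational ones cannot — this is the inductive ``positive spanning forces the translational jet to vanish'' argument, which terminates at a pure rotation about $O$, already covered by $v=0$. The remaining care is in the sign bookkeeping of the second derivative $m_i''(0)$ — notably the term $r_i(1-\kappa_ir_i)$, where the first-order tangential sweep of $x_i$ along its support line interacts with the curvature through $h''(\psi_i)=1/\kappa_i-r_i$ — and in checking that a strict inequality at second order certifies escape, respectively penetration, for the true nonlinear motion; the borderline $\omega=1$, legitimately left open by the statement, is exactly where this second-order information is inconclusive.
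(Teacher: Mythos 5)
The paper itself offers no proof of Theorem~2 --- it is quoted from \cite{BM} with only the pointer ``See \cite{BM}'' --- so there is nothing in the text to compare your argument against line by line. Judged on its own, your strategy is the right one (and is essentially that of \cite{BM}): the second--order expansion $m_i''(0)=r_i(1-\kappa_ir_i)+\tfrac{1}{r_i}\langle f''(0),x_i\rangle$ is correct (it checks out, for instance, for an off--centre disk rotated about $O$: there $r=R-t_0$, $\kappa=1/R$, and the penetration depth is indeed $\tfrac{1}{2}\tfrac{t_0(R-t_0)}{R}\theta^2$), the Farkas/Gordan step is sound because the $x_i$ positively span, and the identity $a_i=b_ir_i^2/\sum_jb_jr_j^2$ is exactly the computation at the end of the paper's proof of Theorem~1, so the translation of $\sum b_ic_i\ge 0$ into $\omega\ge 1$ is legitimate. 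The non--immobilization half ($\omega>1$, explicit escaping motion $R_t+\tfrac12 vt^2$) is complete as written. One caveat you rightly make explicit: the statement only holds under the tacit hypothesis that $O$ lies inside the triangle $x_1x_2x_3$ (all $b_i>0$); otherwise a pure translation escapes regardless of $\omega$.

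The genuine gap is the one you flag yourself, and it is not cosmetic: for $\omega<1$ you only rule out $C^2$ paths whose $2$--jet is nondegenerate, the promised induction on the first nonvanishing jet is not carried out (and is genuinely awkward, since $\theta$ and $f$ may vanish to different orders, e.g.\ $\theta(t)=t^2$, $f(t)=t^3w$), and no path argument by itself establishes that the identity is an \emph{isolated} allowed position, which is the stronger and more standard reading of ``any small rigid movement''. The clean repair is to abandon paths and work directly on the isometry group: set $G_i(\theta,f)=\min_\psi\bigl[h(\psi-\theta)+\langle f-x_i,u(\psi)\rangle\bigr]$, which is $C^2$ near the identity by the implicit function theorem, with $\nabla G_i(0)=(0,u_i)$ and $\partial_\theta^2G_i(0)=r_i(1-\kappa_ir_i)$. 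Since the $u_i$ positively span, $\max_i\langle u_i,f\rangle\ge c_0|f|$, so some $G_i>0$ whenever $|f|\ge K\theta^2$ for a suitable constant $K$; in the complementary regime $|f|\le K\theta^2$ one has $\sum b_ir_iG_i(\theta,f)=\tfrac12(1-\omega)\bigl(\sum_jb_jr_j^2\bigr)\theta^2+o(\theta^2)>0$, because $\sum b_ir_iu_i=\sum b_ix_i=0$ kills the first--order term and all cross terms are $o(\theta^2)$ there. This shows every rigid position near, but different from, the identity forces some $x_i$ into the interior, subsumes your jet induction, and requires no smoothness of the movement at all. With that substitution your proof is complete; the borderline $\omega=1$ remains undecidable at second order, as you note.
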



There is a duality between Theorem~2 and Theorem~1.  While in Theorem2, we have a rigid segment sliding along the boundary of the convex figure $\Phi$, in Theorem~1, we have a rigid angle (formed by two lines) sliding along the boundary of $\Phi$.

In dimension three, immobilization results are much more complicated. See \cite {BFMM}. To characterize when four points in the faces of a tetrahedron $T$  immobilize $T$ we require the following definition. 

Let $\{L_1, L_2, L_3, L_4\}$ be four  directionally independent lines in $\mathbb{R}^3$. We say that they belong {\it generically to one ruling of a quadric surface} if 
\begin{itemize}
\item they are concurrent,
\item they belong to one ruling of a quadric surface, or
\item they meet in pairs and the planes these pairs generate meet in the line through the intersecting points.
\end{itemize}

\begin{theorem}
A necessary and sufficient condition for four points $\{x_1, x_2, x_3, x_4\}$, in the corresponding four faces of a tetrahedron $T$, to immobilize it,  is that the normal lines to $T$ at $x_1, x_2, x_3$ and $x_4$  belong generically to one ruling of a quadratic surface. 
\end{theorem}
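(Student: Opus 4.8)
\medskip
\noindent\emph{Proof sketch.} The plan is to reduce immobilization of $T$ to the first‑ and second‑order analysis of rigid motions of $T$ (in the framework of \cite{BFMM}) and then to recognize the resulting conditions on the four normal lines as the three‑case definition above. Fix outward unit facet normals $n_1,\dots,n_4$, let $A_i>0$ be the facet areas, and let $\hat\ell_i=(n_i,\,x_i\times n_i)\in\mathbb R^6$ be Plücker coordinates of the normal line $\ell_i$ at $x_i$, with $\beta((a,b),(c,d))=\langle a,d\rangle+\langle c,b\rangle$ the Klein form, so that $\hat\ell_i$ lies on the Klein quadric $K$ and $\beta(\hat\ell,\hat\ell')=0$ exactly when the lines $\ell,\ell'$ meet. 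A small motion of $T$ that keeps every $x_j$ out of $\operatorname{int}T$, passed to its inverse path $h_t$ ($h_0=\mathrm{id}$), satisfies $\langle h_t(x_i),n_i\rangle\ge\langle x_i,n_i\rangle$ for all $i$ and all small $t\ge0$ (near $t=0$ only the $i$‑th facet can be active at $x_i$). Writing $\xi=(\omega,v)$ for the twist $x\mapsto\omega\times x+v$, the first nonvanishing derivative of $h_t$ gives a nonzero twist $\xi$ with
$$W_i(\xi):=\beta(\xi,\hat\ell_i)=\langle\omega\times x_i+v,\,n_i\rangle\ \ge\ 0\qquad(i=1,\dots,4),$$
and, on the indices with $W_i(\xi)=0$, a second twist $\eta$ with $W_i(\eta)+Q_i(\xi)\ge0$, where $Q_i(\xi)=\langle\omega\times(\omega\times x_i),\,n_i\rangle$ is the centripetal term (flat facets contribute no curvature term, unlike in Theorem~2). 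Conversely, because the facets are flat, whenever such $\xi,\eta$ exist the constraints can be met to all orders (the obstruction recurring at each order being always the one displayed below), and an honest escaping motion results via Borel's lemma. Hence $\{x_1,\dots,x_4\}$ \emph{fails} to immobilize $T$ iff such a twist exists.

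\emph{The first‑order alternative.} Since $n_1,\dots,n_4$ are the facet normals of a nondegenerate tetrahedron, $\sum_iA_in_i=0$ and this is their only linear dependence; hence the $\hat\ell_i$ are linearly dependent iff $\sum_iA_i\hat\ell_i=0$ (project to the first three coordinates), i.e. iff the four reactions along the normal lines admit an equilibrium with positive magnitudes, i.e. iff $[\hat\ell_1],\dots,[\hat\ell_4]$ span at most a plane $\Pi\subset\mathbb P^5$. By the Gordan alternative for $W=(W_1,\dots,W_4)\colon\mathbb R^6\to\mathbb R^4$, exactly one holds: either there is a $\xi$ with all $W_i(\xi)>0$, in which case $h_t=\exp(t\xi)$ escapes and $T$ is not immobilized; or $\sum_iA_i\hat\ell_i=0$. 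In the latter case $\Pi\cap K$ is a conic in $\Pi$, and its three projective types reproduce exactly the three bullets: $\Pi\subset K$ (an $\alpha$‑plane) iff the lines are concurrent; a smooth conic iff they form one ruling of a smooth quadric; a pair of lines meeting at a point $[t]\in K$ iff the lines meet in two pairs whose spanning planes $\Pi_{12},\Pi_{34}$ share the line $t$ through the two intersection points (a double line, or $\Pi$ a $\beta$‑plane, being ruled out by directional independence). This settles the ``only if'' direction.

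\emph{Second order in the equilibrium case.} Assume $\sum_iA_i\hat\ell_i=0$; this also gives $\sum_iA_i(x_i\times n_i)=0$, i.e. the matrix $N:=\sum_iA_ix_in_i^{T}$ is symmetric. Then any first‑order‑feasible twist has $\sum_iA_iW_i(\xi)=0$ with all terms $\ge0$, so $W_i(\xi)=0$ for every $i$; thus $\xi$ lies in $\Lambda:=\{\hat\ell_1,\dots,\hat\ell_4\}^{\perp_\beta}$, a three‑dimensional space on which $(\omega,v)\mapsto\omega$ is a linear isomorphism onto $\mathbb R^3$ (the $n_i$ span), and a second twist $\eta$ as above exists iff $q(\xi)\ge0$, where $q(\xi):=\sum_iA_iQ_i(\xi)$ (again by a Gordan/duality step: the image of $\eta\mapsto(W_i(\eta))_i$ is the hyperplane $\sum_iA_iz_i=0$). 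So $T$ is immobilized iff $q$ is negative definite on $\Lambda$. Using $\sum_iA_in_i=0$ one checks that $q$ is independent of the origin and of $v$, that $q(\xi)=\omega^{T}N\omega-3\operatorname{Vol}(T)\,|\omega|^2$ with $\operatorname{tr}N=\sum_iA_i\langle x_i,n_i\rangle=3\operatorname{Vol}(T)$, and therefore that the entire theorem comes down to the matrix inequality
$$N=\sum_iA_i\,x_in_i^{T}\ \prec\ 3\operatorname{Vol}(T)\cdot I$$
for every admissible configuration, i.e. whenever $\sum_iA_i\hat\ell_i=0$ and each $x_i$ lies in the relative interior of facet $i$.

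\emph{Main obstacle.} The crux is this last inequality, the three‑dimensional analogue of $\omega<1$ in Theorem~2 with the facet curvatures set to zero. Both sides have trace $3\operatorname{Vol}(T)$, so it is equivalent to: the two smallest eigenvalues of $N$ have positive sum. Writing $N=\operatorname{Vol}(T)\,I+D$ with $D=\sum_iA_i(x_i-\bar x_i)n_i^{T}$ ($\bar x_i$ the facet centroid, so $\sum_iA_i\bar x_in_i^{T}=\operatorname{Vol}(T)\,I$ by the divergence theorem, and $x_i-\bar x_i\perp n_i$), the inequality becomes $\lambda_{\max}(D)<2\operatorname{Vol}(T)$ with $\operatorname{tr}D=0$; this must be extracted from the two facts that $D$ is symmetric (the moment equilibrium $\sum_iA_i(x_i\times n_i)=0$) and that the $x_i$ are interior to the facets, using the convexity of $T$ — equality is approached precisely by the three‑dimensional ``rotors'' of $T$, in parallel with the ``$=1$'' of Theorem~1. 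The remaining points to nail down are: the passage from formal second‑order feasibility to an honest motion under polyhedral constraints; the verification that for directionally independent lines $\Pi\cap K$ can only be of the three listed types; and, in the concurrent case, that the common point lies inside $T$ (so that the relevant signed distances are positive and $D\prec2\operatorname{Vol}(T)\,I$ is immediate there).
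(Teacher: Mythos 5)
The paper itself does not prove this statement: Theorem~3 is quoted from \cite{BFMM} (its proof is only alluded to inside the proof of Theorem~4), so there is no in-paper argument to compare yours against. Judged on its own terms, your screw-theoretic setup is the natural one and very likely parallels \cite{BFMM}: encoding the normal lines by Pl\"ucker coordinates on the Klein quadric, using Gordan's alternative to split into the linearly independent case (escape by a strictly feasible twist) and the equilibrium case $\sum_i A_i\hat\ell_i=0$, identifying the latter with the paper's three-case definition via the possible planar sections of the Klein quadric, and reducing what remains to the sign of $q(\xi)=\omega^{T}N\omega-3\operatorname{Vol}(T)\,|\omega|^{2}$ on the three-dimensional space $\Lambda$. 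The computations you do carry out (symmetry of $N$ from moment equilibrium, $\operatorname{tr}N=3\operatorname{Vol}(T)$ by the divergence theorem, the reduction to $\lambda_{\max}(D)<2\operatorname{Vol}(T)$ with $\operatorname{tr}D=0$, and the classification of $\Pi\cap K$ given that any three of the $\hat\ell_i$ are independent) are correct as far as they go.

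Nevertheless there is a genuine gap, and you name it yourself: the negative definiteness of $q$ on $\Lambda$ --- equivalently $N\prec 3\operatorname{Vol}(T)\,I$ for \emph{every} configuration with $\sum_iA_i\hat\ell_i=0$ and each $x_i$ interior to its facet --- is precisely the sufficiency half of the theorem, and it is asserted rather than proved. Everything preceding it establishes only that immobilization implies the regulus condition; the converse stands or falls with this inequality (this is the content of the ``mondriga quadratic forms'' of \cite{BFMM}), and nothing in your sketch (symmetry of $D$, trace zero, interiority of the $x_i$, convexity of $T$) is yet combined into a proof of the bound except in the easy concurrent case. A second, smaller gap is the equivalence between formal low-order feasibility and an honest escaping motion: you assert that the obstruction ``recurs identically at every order'' and invoke Borel's lemma, but you neither prove this nor treat the borderline case where $q\le 0$ on $\Lambda$ with a nontrivial null direction, where second-order analysis is inconclusive and the stated dichotomy (immobilized iff $q$ negative definite) needs justification. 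Until the matrix inequality and the order-by-order argument are supplied, what you have is a correct and well-organized reduction of the theorem, not a proof of it.
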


The ``duality" mentioned above, gives us the following theorem for rotors in a tetrahedron. 

\begin{theorem} 

Let $\Phi$ a twice continuous differentiable rotor in the tetrahedron $T$, and let $\{x_1, x_2, x_3, x_4\}$ be the points of the boundary of $\Phi$ that intersect the four faces of $T$. Then, the normal lines to $T$ at $x_1, x_2, x_3$ and $x_4$  belong generically to one ruling of a quadratic surface. 
\end{theorem}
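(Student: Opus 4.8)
The plan is to exploit the same "sliding" mechanism used in Theorem 1, now one dimension up, and to convert it into a statement about the normal lines by way of Theorem 4. Since $\Phi$ is a rotor in the tetrahedron $T$, for every rotation $\rho$ of $\mathbb{R}^3$ there is a translation making $T$ circumscribed about $\tau\rho\Phi$; differentiating this family of motions near the identity produces a three-parameter family of rigid motions $L$ (an open neighborhood of the identity in $SO(3)\ltimes\mathbb{R}^3$, projecting isomorphically onto a neighborhood of the identity in $SO(3)$) such that each face-plane $\pi_i$ of $T$ is carried by $L$ to a support plane of $\Phi$; equivalently, $L^{-1}$ applied to the four points $x_i$ keeps $x_i$ on $\pi_i$ to first order. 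This is the exact analogue of the planar sliding lemma, and the first step is to record it carefully: the tangency conditions "$L(\pi_i)$ supports $\Phi$'' cut out, near the identity, a smooth three-dimensional submanifold of the six-dimensional group for each $i$, and the rotor hypothesis says the four such submanifolds $\Sigma_1,\dots,\Sigma_4$ share a common three-dimensional piece. This forces their tangent spaces at the identity to have a common three-dimensional intersection inside $\mathbb{R}^6=\mathfrak{se}(3)$.

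The second step is to identify those tangent spaces. Writing an infinitesimal motion as a screw $(\omega,v)\in\mathbb{R}^3\times\mathbb{R}^3$, the condition that the motion keep the plane $\pi_i$ (with unit normal $n_i$, at signed distance $d_i$ from the origin) tangent to $\Phi$ at $x_i$ is a single linear equation in $(\omega,v)$; a short computation shows it is $\langle v + \omega\times x_i,\, n_i\rangle = 0$, i.e. the normal component of the velocity of the contact point vanishes. So $T_{\mathrm{id}}\Sigma_i$ is the hyperplane $\{(\omega,v): \langle \xi_i,(\omega,v)\rangle =0\}$ where $\xi_i=(x_i\times n_i,\ n_i)$ is precisely the Plücker coordinate vector of the normal line $N_i$ to $T$ at $x_i$. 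The rotor condition "the four hyperplanes meet in a $3$-dimensional subspace'' is then equivalent to "$\dim\mathrm{span}\{\xi_1,\xi_2,\xi_3,\xi_4\}\le 3$'', i.e. the four Plücker vectors are linearly dependent in $\mathbb{R}^6$.

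The third and final step is purely projective: a set of four lines in $\mathbb{R}^3$ whose Plücker coordinate vectors are linearly dependent is exactly a set of four lines lying in a common regulus in the degenerate-or-nondegenerate sense, and this is what the paper's definition "belong generically to one ruling of a quadric surface'' packages. Concretely: if $\xi_1,\dots,\xi_4$ are dependent then some nonzero $\sum c_i\xi_i=0$; passing to the Klein quadric, the span of $\xi_1,\dots,\xi_4$ is a projective plane (generic case) or smaller, meeting the Klein quadric in a conic (generic case) whose lines sweep out one ruling of a quadric in $\mathbb{R}^3$ containing all four $N_i$; the special positions of this projective plane relative to the Klein quadric give exactly the two degenerate alternatives in the definition (all four lines concurrent, or meeting in pairs with the coplanarity condition). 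One then simply invokes Theorem 4 in reverse — or rather, observes that the condition "normals belong generically to one ruling of a quadric'' is by Theorem 4 the same as "four points on the faces immobilize $T$'', and that the infinitesimal-rigidity computation above is the negation of immobilization — so the rotor, being movable, lands in the immobilization-failure locus, which is precisely the stated quadric condition.

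The main obstacle I expect is the third step: making the equivalence "four Plücker vectors linearly dependent $\iff$ the four lines belong generically to one ruling of a quadric'' fully rigorous, including a careful accounting of all the degenerate configurations of the span of $\xi_1,\dots,\xi_4$ against the Klein quadric so that they match the three bullet points in the paper's definition. The cleanest route is probably not to reprove this from scratch but to cite it through Theorem 4: the linear-dependence condition is exactly the infinitesimal obstruction to immobilization of the tetrahedron by the four face points, Theorem 4 states that the genericity-on-a-quadric condition is equivalent to immobilization, and a rotor furnishes an actual (not merely infinitesimal) motion, hence certifies non-immobilization, hence the quadric condition holds. The remaining care is only to check that "rotor'' yields genuine non-immobilization, i.e. that the sliding motion $L_\theta$ does not push any $x_i$ into the interior of $T$ — which is immediate since each $x_i$ stays on its face-plane, and in fact on a support plane of $\Phi$, throughout the motion.
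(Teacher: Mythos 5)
Your main line of attack is genuinely different from the paper's. The paper argues variationally: for each $\rho\in SO(3)$ it takes the tetrahedron $T(\rho)$ homothetic to $\rho T$ circumscribed about $\Phi$, notes that $\Phi$ is a rotor iff the volume $V_\Phi(\rho)$ is constant (so every $\rho$ is a local maximum), and shows that at a local maximum the four normals satisfy the quadric condition, by reducing to the tetrahedron of contact points and invoking the sliding argument of \cite{BFMM} in a dual form (a rigid trihedral angle sliding on three fixed points). Your route instead works at first order in the isometry group: the tangency loci $\Sigma_i\subset SE(3)$ are hypersurfaces (codimension one, hence \emph{five}-dimensional, not three-dimensional as you write --- though you correctly use hyperplane tangent spaces afterwards); the rotor provides a three-dimensional section of $\bigcap_i\Sigma_i$ through the identity, forcing the four functionals $\xi_i=(x_i\times n_i,\,n_i)$, which are the Pl\"ucker vectors of the normal lines, to be linearly dependent; and linear dependence of four Pl\"ucker vectors of directionally independent lines is exactly the paper's ``generically one ruling'' condition, read off from the position of the plane spanned by the four Klein points relative to the Klein quadric. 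Modulo supplying the differentiability of $\rho\mapsto\tau(\rho)$ at the identity (the three-dimensional analogue of Lemma 1) and writing out the Klein-quadric case analysis against the three bullets of the definition, this is a correct and arguably more self-contained proof.

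The shortcut you propose for the last step, however, is wrong, and wrong in direction. Theorem 3 of the paper (your ``Theorem 4'') says the quadric condition is equivalent to the four points \emph{immobilizing} $T$; hence ``the rotor certifies non-immobilization, hence the quadric condition holds'' would, if its premise were true, prove the \emph{negation} of the theorem, since non-immobilization is equivalent to the failure of the quadric condition. The premise is also false: the sliding keeps each moving face plane $L_\theta(\pi_i)$ tangent to $\Phi$, but the point of tangency migrates along the boundary, so the fixed point $x_i\in\mathrm{bd}\,\Phi$ generically moves \emph{strictly inside} the supporting half-space of $L_\theta(\pi_i)$ and therefore into the interior of $L_\theta(T)$ --- which is consistent with the contact points immobilizing $T$, as Theorems 3 and 4 jointly assert. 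So the final step cannot be routed through the immobilization theorem at all; you must prove the equivalence ``four Pl\"ucker vectors dependent if and only if the lines belong generically to one ruling'' directly, which is classical line geometry and for which your Klein-quadric sketch is the right framework.
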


\begin {proof} Consider a tetrahedron $T$ that circumscribes $\Phi$.  For every $\rho \in SO(3)$, let $T(\rho)$ be the tetrahedron directly homothehtic to $\rho T$ circumscribing $\Phi$ and let $V_{\Phi}(\rho)$ be the volume of  of $T(\rho)$. It is not difficult to see that $V_{\Phi}(\rho)$ depends continuously on $\rho$.

We will prove that if $\rho_0$ is a local maximum of $V_{\Phi}(\rho)$, then 
the four normal lines to the boundary of $\Phi$  at the points that touch the four faces of $T(\rho_0)$, belong generically to one ruling of a quadratic surface.  If this is so, then the proof the theorem is complete because $\Phi$ is a rotor in 
$T$ if and only if $V_{\Phi}(\rho)$ is constant. 
For the proof of the above statement, it will be sufficient  to consider the case in which $\Phi$ is a tetrahedron. The reason is that if $a,b,c$  and $d$ are the points in which the sides of $T(\rho_0)$ touch the boundary of $\Phi$, then 
$\rho_0$ is also a local maximum of $V_{K}(\rho)$, where $K$ is the tetrahedron with vertices $\{a,b, c,d\}$.

Let $H_a, H_b, H_c$ and $H_d$ be four planes containing the  faces of the tetrahedron  $T(\rho_0)$, in such a way that $a\in H_a, b\in H_b, c\in H_c$ and $d\in H_d$, respectively. Assume now that a $T(\rho_0)$ is a rigid tetrahedron  sliding along $a, b, c$.  That is, $T(\rho_0)$ is sliding rigidly in such a way that the points $a, b, c$  remain fixed but inside the planes  $H_a, H_b$ and $H_c$, and during the rigid sliding movement of $T(\rho_0)$, the fixed point $d$ is  always inside $T(\rho_0)$.
 
The proof of Theorem 4 now follows straightforward from the proof or Theorem 3 in \cite{BFMM}, but this time we consider, instead of a rigid triangle sliding along three fixed planes, the dual situation of a $3$-dimensional rigid sector (the angle between three planes $H_a, H_b$ and $H_c$) sliding along three fixed points $a,b,c$. 

\end{proof}


\begin{thebibliography}{11}


\bibitem{BM} Bracho J.,  Montejano L. and Urrutia J. Immobilization of smooth convex curves. \emph{Geometriae Dedicata}. \textbf{53} (1994), 119-131.
\bibitem{BFMM} Bracho J.,  Fetter H., Mayer D. and Montejano L. Immobilization of solids and mondriga quadratic forms. \emph{Journal of the London Math. Soc.} \textbf{51} (1995), 189-200.
\bibitem{BY}  Boltianski, W.G.\ and Yaglom, I.M., \emph{Convex Figures.} Holt Rinehart  and Winston, New York 1961.
\bibitem{Gol} Golberg, M.,  Rotors in polygons and polyhedra. \emph{Math. Comput.} \textbf{14}, (1960), 229-239.


\bibitem{K} Kuperberg W.,  DIMACS Workshop in Polytopes \emph{Rutgers University . Jan}. 1990.

\bibitem{MNP1} Markenscoff X., Ni L. and Papadimitrou CH. H.,Optimal grid of a polygon. \emph{Int J. Robotics Research}. \textbf{8}(2) (1989), 17-29.

\bibitem{MNP2} Markenscoff X., Ni L. and Papadimitrou CH. H.,The geometry gof grasping. \emph{Int J. Robotics Research}. \textbf{9}(1) (1990), 61-74.

\bibitem{O} O'Rourke. J. Comptutational Geometry, column 9 \emph{SIGACT News}. \textbf{21}(1) (1990), 18-20, No.74.



\end{thebibliography}
\end{document}